\documentclass[11pt]{article}

\usepackage[T1]{fontenc}
\usepackage{amsmath,amssymb,amsthm,mathtools}
\usepackage[letterpaper,margin=1in]{geometry}
\usepackage[hidelinks]{hyperref}

\hypersetup{
  pdftitle={The 2-Domination Number and the Upper Median Degree: A Proof of Graffiti.pc Conjecture 387},
  pdfauthor={Jun Qing},
  pdfsubject={Graph theory and 2-domination},
  pdfkeywords={2-domination, domination number, degree sequence, upper median degree, Graffiti.pc}
}

\newtheorem{theorem}{Theorem}
\newtheorem{lemma}[theorem]{Lemma}
\theoremstyle{remark}
\newtheorem{remark}[theorem]{Remark}

\newcommand{\gammatwo}{\gamma_{2}}

\title{The 2-Domination Number and the Upper Median Degree:\\
A Proof of Graffiti.pc Conjecture 387}
\author{Jun Qing\\[2pt]\small Independent Researcher}
\date{July 2026}

\begin{document}

\maketitle

\begin{abstract}
Let $G$ be a nonempty finite simple graph of order $n$, and let $m(G)$ be the
upper median of its degree sequence.  We prove that
\[
  \gammatwo(G)\leq n-m(G)+1,
\]
where $\gammatwo(G)$ is the minimum cardinality of a set $D\subseteq V(G)$
such that every vertex outside $D$ has at least two neighbors in $D$.  This
confirms Graffiti.pc Conjecture 387.  In fact, the argument shows that the
connectedness hypothesis in the original formulation is unnecessary.  The
proof uses the complement graph and a minimally linearly dependent family of
polynomials encoding selected nonneighborhoods.
\end{abstract}

\noindent\textbf{Keywords.}
2-domination; domination number; degree sequence; upper median degree;
Graffiti.pc.

\medskip
\noindent\textbf{2020 Mathematics Subject Classification.}
05C69.

\section{Introduction}

All graphs in this note are finite and simple.  A set $D\subseteq V(G)$ is
\emph{2-dominating} if every vertex in $V(G)\setminus D$ has at least two
neighbors in $D$.  The minimum cardinality of such a set is the
\emph{2-domination number} $\gammatwo(G)$.  For general background on
domination in graphs, see Haynes, Hedetniemi, and Slater~\cite{HaynesEtAl}.

Let the degree sequence of an $n$-vertex graph $G$ be written in
nondecreasing order as
\[
  d_1\leq d_2\leq\cdots\leq d_n.
\]
Its \emph{upper median degree} is
\[
  m(G)=d_{\lfloor n/2\rfloor+1}.
\]

A conjecture generated by Graffiti.pc and recorded by DeLaVi{\~n}a, Larson,
Pepper, and Waller~\cite[Conjecture~1]{DeLaVinaEtAl} predicts, for connected
graphs, the upper bound
\begin{equation}\label{eq:conjecture}
  \gammatwo(G)\leq n-m(G)+1.
\end{equation}
The displayed statement of that conjecture in~\cite{DeLaVinaEtAl} contains
the opposite inequality sign.  However, the sentence immediately preceding
it explicitly states the upper bound~\eqref{eq:conjecture}, and the subsequent
discussion and partial results use the upper-bound direction.  Thus the
displayed sign is a typographical error.

We prove the conjectured inequality for every nonempty finite simple graph,
without assuming connectedness.

\begin{theorem}\label{thm:main}
Let $G$ be a nonempty finite simple graph of order $n$.  Then
\[
  \gammatwo(G)\leq n-m(G)+1.
\]
\end{theorem}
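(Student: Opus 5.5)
We pass to the complement $H=\overline{G}$ and translate the statement into one about independent-like sets in $H$. Write $m=m(G)$ and look for $S\subseteq V(G)$ with $|S|\ge m-1$ such that $D=V\setminus S$ is 2-dominating. If $m\le 1$ the bound is trivial, since $D=V$ works or a single vertex can be dropped. So assume $m\ge 2$. For $v\in S$, the number of $G$-neighbours of $v$ in $D$ is $|D|-|N_H(v)\cap D|$. With $|D|=n-m+1$, the condition becomes
\[
  |N_H(v)\setminus S|\le n-m-1\qquad\text{for every } v\in S. \tag{$*$}
\]
Let $A$ be the set of vertices with $\deg_G(v)\ge m$. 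By the definition of the upper median, $|A|\ge n-\lfloor n/2\rfloor=\lceil n/2\rceil$. Every $v\in A$ has $\deg_H(v)\le n-1-m$, so $(*)$ holds for $v\in A$ regardless of how $S$ is chosen. Hence if $|A|\ge m-1$, any $(m-1)$-subset $S\subseteq A$ gives a 2-dominating set of size $n-m+1$, and we are done.

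The remaining case is $|A|<m-1$, so $m>\lceil n/2\rceil+1$ and the median degree is large. We take $S=A\cup B$ with $B\subseteq V\setminus A$ and $|B|=m-1-|A|$, and we must choose $B$ so that $(*)$ holds for the vertices of $B$. The vertices of $A$ remain safe no matter how $B$ is chosen. Every $v\in B$ has at least $n-m-1$ non-neighbours in $H$ inside $S$ that are not available to it outside $S$. So the task becomes: pick $B$ so that every $b\in B$ has at most $n-m-1$ $H$-neighbours in $V\setminus S$. Equivalently, many of its $H$-neighbours must lie in $S$.

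The main obstacle is producing such a $B$, and here the abstract's method enters. For each low-degree vertex $u\notin A$, encode its non-neighbourhood by a polynomial, for instance
\[
  p_u=\prod_{w\in N_H(u)}(1-x_w)
\]
or a multilinear variant, in the variables $x_w$ indexed by $V$. Evaluating at the indicator vector of $S$ detects how many $H$-neighbours of $u$ lie outside $S$. The $H$-degrees of vertices outside $A$ are at most $n-1-d_1$, and there are only $\lfloor n/2\rfloor$ such vertices. Comparing the number of these polynomials with the dimension of the space they live in (degree bounded in terms of $n-m$) should force a linear dependence. Take a \emph{minimally} linearly dependent subfamily $\{p_u:u\in B_0\}$. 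Minimality gives every coefficient nonzero and makes each $p_u$ with $u\in B_0$ expressible through the others. The plan is to show this forces the $H$-neighbourhoods of the vertices of $B_0$ to overlap heavily, so that $A\cup B_0$, suitably trimmed or augmented to size exactly $m-1$, satisfies $(*)$.

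I expect the key difficulty to be the counting that guarantees the dependence, i.e.\ ensuring there are more polynomials than the dimension, and then converting a minimal dependence into the combinatorial inequality $(*)$. If the polynomial route stalls, I would fall back on a greedy or extremal choice: take $B$ maximizing $\sum_{b\in B}|N_H(b)\cap S|$, and argue by a swapping argument that a violating vertex could be exchanged to increase this quantity.
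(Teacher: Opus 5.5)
There is a genuine gap, and it is structural. Your first case ($|A|\ge m-1$) is fine and is essentially the paper's easy case. The problem is the second case. By fixing $S=A\cup B$ you force the 2-dominating set $D=V\setminus S$ to consist \emph{only} of low-degree vertices, and this can be impossible.

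Take $G=K_3\vee\overline{K_3}$, a triangle joined completely to three independent vertices. Then $n=6$, the degrees are $3,3,3,5,5,5$, and $m=5$. Here $A$ is the triangle, and $|A|=3<4=m-1$. Your recipe makes $D$ two of the independent vertices. The third independent vertex $b\in S$ then has no neighbour in $D$; in your notation $|N_{\overline G}(b)\setminus S|=2>0=n-m-1$. In fact the only 2-dominating sets of size $2$ are pairs of triangle vertices, so every admissible $S$ omits two vertices of $A$. No choice of $B$, no polynomial argument, and no swapping argument inside your framework can succeed.

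The proposed dependence count is also aimed the wrong way. Polynomials indexed by the at most $\lfloor n/2\rfloor$ low-degree vertices, of degree up to $n-1-d_1$, are not forced to be dependent. In this example, the polynomials $(1-x_{2})(1-x_{3})$, $(1-x_{1})(1-x_{3})$, $(1-x_{1})(1-x_{2})$ attached to the independent vertices $1,2,3$ are linearly independent.

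The missing idea is to reverse the roles, so that $D$ contains high-degree vertices.
\begin{enumerate}
\item Put $t=n-1-m$ and $L=V\setminus A$. For each \emph{high}-degree vertex $x\in A$, pad $N_{\overline G}(x)\cap L$ to a $t$-element set $A_x\subseteq L$. This is possible since $|L|\ge t+2$ in this case.
\item Take the univariate polynomial $p_x=\prod_{z\in A_x}(X-a_z)$, with distinct reals $a_z$. There are $|A|\ge\lceil n/2\rceil\ge |L|\ge t+2$ of these in a space of dimension $t+1$. So there is a minimally dependent subfamily indexed by some $C\subseteq A$, with $|C|=r$.
\item Let $P=\bigcap_{x\in C}A_x$. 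Evaluating the dependence at $X=a_z$ shows that each $z\in L$ outside $P$ lies in at most $r-2$ of the sets $A_x$. Dividing out $\prod_{z\in P}(X-a_z)$ gives $|P|\le t+2-r$.
\item Choose $Y\subseteq L$ with $P\subseteq Y$ and $|Y|=t+2-r$. The set $D=C\cup Y$ then has size $n-m+1$.
\item Vertices of $A$ outside $D$ are safe as before. Each $v\in L\setminus D$ has at most $(r-2)+(t+2-r)=t$ non-neighbours in $D$, hence at least two neighbours there.
\end{enumerate}
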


The only algebraic ingredient is the following elementary lemma about a
minimal linear dependence among polynomials.

\section{A polynomial lemma}

\begin{lemma}\label{lem:polynomial}
Let $B$ be a finite set, let $t\geq 0$, and choose pairwise distinct real
numbers $a_z$ for $z\in B$.  Suppose $A_1,\ldots,A_r$ are $t$-element
subsets of $B$, and define
\[
  p_i(X)=\prod_{z\in A_i}(X-a_z) \qquad (1\leq i\leq r).
\]
Assume that the indexed family $(p_i)_{i=1}^{r}$ is minimally linearly
dependent over $\mathbb{R}$.  Put
\[
  P=\bigcap_{i=1}^{r}A_i,
  \qquad s=|P|.
\]
Then:
\begin{enumerate}
  \item $2\leq r\leq t+2$;
  \item every element of $B\setminus P$ belongs to at most $r-2$ of the sets
        $A_1,\ldots,A_r$;
  \item $s\leq t+2-r$.
\end{enumerate}
\end{lemma}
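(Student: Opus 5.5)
The plan is to normalize the minimal dependence relation, factor out the common part $P$, and then use a dimension count for (1) and (3) and an evaluation argument for (2).

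\emph{Normalizing the relation.} By minimal linear dependence there are real coefficients $c_1,\ldots,c_r$, not all zero, with $\sum_{i=1}^r c_i p_i=0$. Every $c_i$ must be nonzero: otherwise dropping the indices with $c_i=0$ leaves a dependent proper subfamily. Each $p_i$ is monic, hence nonzero. Therefore no single $p_i$ is dependent on its own, which gives $r\ge 2$.

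\emph{Factoring out $P$.} Put $q(X)=\prod_{z\in P}(X-a_z)$ and $p_i=q\,p_i'$, where $p_i'(X)=\prod_{z\in A_i\setminus P}(X-a_z)$ has degree $t-s$. Since $\mathbb{R}[X]$ is an integral domain, multiplication by $q$ is an injective linear map. Hence a linear relation among the $p_i'$ holds if and only if the same relation holds among the $p_i$. So the family $(p_i')_{i=1}^r$ is again minimally linearly dependent.

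\emph{Proof of (1) and (3).} A minimally dependent family of $r$ vectors has rank exactly $r-1$: removing any one vector leaves an independent family of $r-1$ vectors spanning the same space. The $p_i'$ lie in the space of polynomials of degree at most $t-s$, which has dimension $t-s+1$. Hence $r-1\le t-s+1$, which is (3). Together with $s\ge 0$ this gives $r\le t+2$, completing (1).

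\emph{Proof of (2).} Fix $z\in B\setminus P$ and evaluate the relation at $X=a_z$. Every term with $z\in A_i$ vanishes. For $z\notin A_i$, the value $p_i(a_z)=\prod_{w\in A_i}(a_z-a_w)$ is nonzero because the $a$'s are pairwise distinct. This leaves
\[
\sum_{i:\,z\notin A_i} c_i\,p_i(a_z)=0,
\]
a sum of nonzero terms. Such a sum cannot consist of exactly one term. It is also nonempty, since $z\notin P$ means $z$ misses some $A_i$. So at least two indices $i$ satisfy $z\notin A_i$, and $z$ lies in at most $r-2$ of the sets.

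The only delicate point is the transfer of minimality through division by $q$ and the rank-$(r-1)$ fact; both are handled above, so no step should be a real obstacle.
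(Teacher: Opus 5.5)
Your proposal is correct and follows the paper's proof. Evaluating the dependence at $a_z$ gives (2), and factoring out $\prod_{z\in P}(X-a_z)$ followed by a dimension count in degree $\le t-s$ gives (3); the only cosmetic difference is that you obtain $r\le t+2$ as a corollary of (3) with $s\ge 0$, whereas the paper first proves it directly in the $(t+1)$-dimensional space.
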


\begin{proof}
Each $p_i$ is nonzero, so $r\geq 2$.  Minimal dependence implies that any
$r-1$ of the polynomials are linearly independent.  Since all $p_i$ lie in
the $(t+1)$-dimensional vector space of real polynomials of degree at most
$t$, we obtain $r-1\leq t+1$, and hence $r\leq t+2$.

Choose a nontrivial dependence
\begin{equation}\label{eq:dependence}
  \sum_{i=1}^{r}c_i p_i=0.
\end{equation}
Minimality implies $c_i\neq 0$ for every $i$.  If some $z\in B$ belonged to
exactly $r-1$ of the sets $A_i$, then evaluating~\eqref{eq:dependence} at
$X=a_z$ would leave exactly one nonzero summand.  Indeed, if $z\notin A_j$,
then
\[
  p_j(a_z)=\prod_{u\in A_j}(a_z-a_u)\neq 0
\]
because the numbers $a_u$ are pairwise distinct.  This contradiction shows
that an element occurring in at least $r-1$ of the sets must occur in all
$r$ of them.  Consequently, every element outside $P$ occurs in at most
$r-2$ sets.

Finally, define
\[
  g(X)=\prod_{z\in P}(X-a_z)
\]
and write $p_i=g\widetilde p_i$.  Multiplication by the nonzero polynomial
$g$ is injective, so the indexed family $(\widetilde p_i)_{i=1}^{r}$ is again
minimally linearly dependent.  Each $\widetilde p_i$ has degree $t-s$.
Therefore any $r-1$ of them are linearly independent in a vector space of
dimension $t-s+1$, giving
\[
  r-1\leq t-s+1.
\]
Equivalently, $s\leq t+2-r$.
\end{proof}

\section{Proof of the main theorem}

\begin{proof}[Proof of Theorem~\ref{thm:main}]
Write $m=m(G)$.  If $m=0$, then $V(G)$ is a 2-dominating set and
\[
  \gammatwo(G)\leq n<n+1=n-m+1.
\]
Hence assume $m\geq 1$.  Let $F=\overline G$ and put
\[
  t=n-1-m,
  \qquad q=t+2=n-m+1.
\]
In particular, $0\leq t\leq n-2$ and $2\leq q\leq n$.

Define
\[
  H=\{x\in V(G):d_F(x)\leq t\},
  \qquad B=V(G)\setminus H,
\]
and write $h=|H|$ and $k=|B|$.  Since
\[
  d_F(x)=n-1-d_G(x),
\]
a vertex lies in $H$ exactly when its degree in $G$ is at least $m$.  By the
definition of the upper median, at least $\lceil n/2\rceil$ vertices have
degree at least $m$.  Thus
\begin{equation}\label{eq:majority}
  h\geq\left\lceil\frac n2\right\rceil,
  \qquad
  k\leq\left\lfloor\frac n2\right\rfloor,
  \qquad h\geq k.
\end{equation}

First suppose that $k\leq t+1$.  Since $k<q\leq n$, choose a $q$-element set
$D$ with $B\subseteq D$.  Every vertex $v\notin D$ belongs to $H$, so
\[
  d_F(v,D)\leq d_F(v)\leq t.
\]
As $v\notin D$, its neighbors in $F$ among the vertices of $D$ are exactly
its nonneighbors in $G$ among those vertices.  Hence
\[
  d_G(v,D)=|D|-d_F(v,D)\geq q-t=2.
\]
Thus $D$ is 2-dominating.

It remains to consider the case
\begin{equation}\label{eq:largeB}
  k\geq t+2.
\end{equation}
For each $x\in H$, we have
\[
  |N_F(x)\cap B|\leq d_F(x)\leq t.
\]
Using~\eqref{eq:largeB}, extend $N_F(x)\cap B$ to a $t$-element subset
$A_x\subseteq B$.  Choose pairwise distinct real numbers $a_z$ for $z\in B$,
and define
\[
  p_x(X)=\prod_{z\in A_x}(X-a_z) \qquad (x\in H),
\]
where the empty product is $1$ when $t=0$.

The vector space of real polynomials of degree at most $t$ has dimension
$t+1$.  By~\eqref{eq:majority} and~\eqref{eq:largeB},
\[
  h\geq k\geq t+2,
\]
so the indexed family $(p_x)_{x\in H}$ is linearly dependent.  Choose an
inclusion-minimal dependent subfamily, indexed by a set $C\subseteq H$, and
put $r=|C|$.  Apply Lemma~\ref{lem:polynomial} to the sets $(A_x)_{x\in C}$.
With
\[
  P=\bigcap_{x\in C}A_x,
  \qquad s=|P|,
\]
the lemma gives
\begin{equation}\label{eq:intersection}
  s\leq t+2-r,
\end{equation}
and every element of $B\setminus P$ belongs to at most $r-2$ of the sets
$A_x$ with $x\in C$.

Because $2\leq r\leq t+2$, we have $0\leq t+2-r\leq t$.  Together with
\eqref{eq:intersection} and $|B|=k\geq t+2$, this allows us to choose
$Y\subseteq B$ such that
\[
  P\subseteq Y,
  \qquad |Y|=t+2-r.
\]
Set
\[
  D=C\cup Y.
\]
Since $C\subseteq H$ and $Y\subseteq B$, the union is disjoint and
\[
  |D|=r+(t+2-r)=t+2=q.
\]

Let $v\notin D$.  If $v\in H$, then simply
\[
  d_F(v,D)\leq d_F(v)\leq t.
\]
Now suppose $v\in B$.  Since $P\subseteq Y$ and $v\notin Y$, we have
$v\notin P$.  Lemma~\ref{lem:polynomial} therefore shows that $v$ belongs to
at most $r-2$ of the sets $A_x$ with $x\in C$.  As
$N_F(x)\cap B\subseteq A_x$ for every $x\in C$, and $F$ is undirected,
\[
  d_F(v,C)
  =|\{x\in C:v\in N_F(x)\cap B\}|
  \leq r-2.
\]
Consequently,
\[
  d_F(v,D)=d_F(v,C)+d_F(v,Y)
  \leq (r-2)+|Y|=t.
\]
We have shown that every $v\notin D$ satisfies $d_F(v,D)\leq t$.  Therefore
\[
  d_G(v,D)=|D|-d_F(v,D)\geq(t+2)-t=2.
\]
Thus $D$ is a 2-dominating set of cardinality
\[
  |D|=q=n-m+1,
\]
and the theorem follows.
\end{proof}

\begin{remark}
The bound is sharp for every $n\geq 2$: for the complete graph $K_n$, one has
$m(K_n)=n-1$ and $\gammatwo(K_n)=2$.
\end{remark}

\begin{remark}
Connectedness is not used anywhere in the proof.  Hence the conclusion
strengthens the original formulation of Graffiti.pc Conjecture 387.
\end{remark}

\section*{Machine verification and data availability}

The theorem has also been formalized in Lean 4 using mathlib.  The
machine-checked development contains no proof placeholders or custom axioms.
Its final declarations are \texttt{Graffiti387.}\allowbreak
\texttt{graffiti\_pc\_387} and \texttt{Graffiti387.}\allowbreak
\texttt{twoDominationNumber\_le}.  The complete source, pinned
toolchain, and verification instructions are archived at
\href{https://doi.org/10.5281/zenodo.21621226}
{doi:10.5281/zenodo.21621226} and maintained in the
\href{https://github.com/qscqesze/graffiti-pc-conjecture-387}
{project's GitHub repository}.

\end{document}